\title{\textbf{Higher Galois for Segal Topos and Natural Phenomena}}
\author{Renaud Gauthier \footnote{rg.mathematics@gmail.com} \\ \\}
\theoremstyle{definition}
\newtheorem*{acknowledgments}{Acknowledgments}
\newtheorem{PiGpd}{Theorem}[section]
\newtheorem{istarwelldef}[PiGpd]{Lemma}
\newtheorem{istarff}[PiGpd]{Lemma}
\newtheorem{RuHomATSeT}[PiGpd]{Lemma}
\newtheorem{iTLadj}[PiGpd]{Theorem}
\DeclareMathOperator*{\adj}{\rlarr}
\newcommand{\beq}{\begin{equation}}
\newcommand{\eeq}{\end{equation}}
\newcommand{\rarr}{\rightarrow}
\newcommand{\rlarr}{\rightleftarrows}
\newcommand{\Rarr}{\Rightarrow}
\newcommand{\xrarr}{\xrightarrow}
\newcommand{\cC}{\mathcal{C}}
\newcommand{\cU}{\mathcal{U}}
\newcommand{\cX}{\mathcal{X}}
\newcommand{\cY}{\mathcal{Y}}
\newcommand{\bL}{\mathbb{L}}
\newcommand{\bR}{\mathbb{R}}
\newcommand{\Cat}{\text{Cat}}
\newcommand{\Hom}{\text{Hom}}
\newcommand{\Ho}{\text{Ho}\,}
\newcommand{\Loc}{\text{Loc}}
\newcommand{\op}{\text{op}}
\newcommand{\Set}{\text{Set}}
\newcommand{\Top}{\text{Top}}
\newcommand{\uHom}{\underline{\Hom}}
\newcommand{\CatD}{\Cat_{\Delta}}
\newcommand{\dkAff}{\text{d}k\text{-Aff}}
\newcommand{\dStk}{\text{dSt}(k)}
\newcommand{\fSets}{\text{fSets}}
\newcommand{\Gal}{\text{Gal}}
\newcommand{\hatXop}{\widehat{\cX^{\op}}}
\newcommand{\kMod}{k\text{-Mod}}
\newcommand{\Piinf}{\Pi_{\infty}}
\newcommand{\PiThat}{\Pi_{\infty, T}^{\wedge}}
\newcommand{\PiTX}{\Pi_{\infty,T}\hatXop}
\newcommand{\RHom}{\mathbb{R} \uHom}
\newcommand{\RHomgTX}{\mathbb{R} \uHom_{\SeT}(T, \cX)}
\newcommand{\RHomLgXT}{\mathbb{R}\uHom^*_{\SeT}(\cX, T)}
\newcommand{\RHomRgTX}{\mathbb{R} \uHom_{*, \SeT}(T, \cX)}
\newcommand{\RHomgeom}{\RHom^{\text{geom}}}
\newcommand{\RHomg}{\RHom_{\SeT}}
\newcommand{\RHomRg}{\RHom_{*,\SeT}}
\newcommand{\RuHom}{\bR \uHom}
\newcommand{\SetD}{\Set_{\Delta}}
\newcommand{\skCAlg}{\text{s}k\text{-CAlg}}
\newcommand{\SeT}{\text{SeT}}
\newcommand{\SeCat}{\text{SeCat}}
\newcommand{\SePC}{\text{SePC}}
\newcommand{\SeGpd}{\text{SeGpd}}
\newcommand{\tSetsG}{\text{tSets}^G}
\begin{document}
\maketitle
\begin{abstract}
In \cite{TV1}, Toen and Vezzosi show that $\RHomgeom(T,\cX)$ is a Segal groupoid, for $T$ a Segal topos, $\cX = \Loc(X)$ the Segal category of locally constant stacks on a CW complex $X$. Taking the realization of such a groupoid as in \cite{HS} defines a pro-object $H_T = |\RHomgeom(T, -)|$ that is defined to be the homotopy shape of the topos $T$ (\cite{TV1}). What we do instead is fix $\cX$, any Segal topos, and let $T$ vary. We show that $\RHom^*_{lex}(\cX,T) = \RHomgeom(T,\cX)$ is a Segal groupoid, from which it follows that we have a universal map from $\cX$ to the Segal category of $\cX$-local systems on $\RHomgeom(\cX,\cX)$, in the spirit of \cite{Hoy} where it is proved, morally, that local systems on $H_T$ are equivalent to $T$ itself. We provide one application of this formalism to the setting where $\cX=\dStk$ is the Segal topos of derived stacks, $k$ a commutative ring. We argue objects of $\dStk$ correspond to manifestations of natural laws, themselves modeled by simplicial algebras, objects of $\skCAlg$ (\cite{TV2}, \cite{TV}, \cite{TV4}, \cite{T}).
\end{abstract}

\newpage

\section{Introduction}
From the perspective of shape theory, one can define the shape of a Segal topos $T$, as Toen and Vezzosi did in \cite{TV1}, as being defined by $H_T = |\RHomgeom(T,-)|$. Here $|\quad |$ denotes the realization of a Segal category as in \cite{HS}, left adjoint of the fundamental groupoid functor $\Piinf$. In \cite{TV1} it is proven that for $X$ a CW complex, we have for $T = \Loc(*) = \Top$, $X \simeq |\RHomgeom(\Top, \Loc(X))|$, or equivalently $\Piinf(X) \simeq \RHomgeom(\Top, \Loc(X))$. In SGA 1 however, Grothendieck suggests that the category of fiber functors from $\cX$ to $T$ in $\SeT$, $\RHomLgXT = \RHomgeom(T,\cX)$, could be called a fundamental groupoid, thereby providing a generalization of the above result from \cite{TV1}. This can be made precise, as we will do in this work, but this won't be the fundamental $\infty$-groupoid of $\cX$ itself, rather it will be regarded as the fundamental $\infty$-groupoid of a Segal category of functors instead.\\

To see how this comes about, recall that for $T$ and $\cX$ two Segal topos, or more generally, two topos, a geometric morphism $f$ from $T$ to $\cX$ consists of a pair of adjoint functors, $f_*: T \rarr \cX$ and $f^*: \cX \rarr T$, $f^* \dashv f_*$, $f^*$ left exact. Thus if $\SeT$ denotes the category of Segal topos, we adopt the notation $\RHomgTX$ for $\RHomgeom(T, \cX)$, which one can define as $\RHomLgXT \simeq \RHomRgTX^{\op}$ depending on whether we fix our attention on left adjoints, or right adjoints. Now it turns out fiber functors are left exact, and working with Segal topos it is therefore natural, in the spirit of Grothendieck, to expect the groupoid of fiber functors in the Segal setting to be $\RHomLgXT$, sub-Segal category of $\RHom(\cX, T) = \hatXop$, relative to $T$, which is implied here. We prove, using Toen's work in \cite{TV1}, that $\RuHom^*_{\SeT}(\cX,T)$ is a Segal groupoid for any $T$. Hence we adopt the suggestive notation $\RHomLgXT = \PiTX$.\\

Considering local systems $i_T = \RHom(-,T)$ on it, we show we have a unique, universal map $\cX \rarr i_T \PiTX$. In particular if $T = \cX$, we have that for any Segal groupoid $\cY$, we have a morphism of Segal categories from $i_{\cX} \Pi_{\infty,\cX} \widehat{\cX^{\op}}$ to the internal perception $\RuHom(\cY, \cX)$ of $\cX$ relative to $\cY$ (see \cite{RG3} for a discussion of perceptions). In other terms the perception $\RuHom(-,\cX)$ of $\cX$ originates in part from $i_{\cX}\Pi_{\infty,\cX}\widehat{\cX^{op}}$.\\

\newpage

If we apply this in particular to a description of all natural phenomena, one can envision that laws of nature be modeled by simplicial algebras, objects in $\skCAlg$ for $k$ a commutative ring, realized via derived stacks, the collection of which is a Segal topos $\dStk$, to which we can apply all this formalism. One advantage of using stacks to model algebraic realizations of natural laws is that one can completely bypass the use of "target spaces" and "fields" living on such spaces, whose dynamics is given by equations of motion derived from an independent "Lagrangian", notwithstanding the fact that such target spaces for the most part would be obtained from compactification(s). Here dynamics is dictated by the functoriality of stacks themselves, which we interpret as providing coherent manifestations of natural laws. All the derived stacks $F: \skCAlg \rarr \SetD$ together form a Segal topos $\cX = \dStk$. A holistic picture corresponds to considering all stacks and the interactions between them. This is provided by $\RuHom^*_{\SeT}(\cX, \cX)$, another Segal category which gives us a functorial presentation of $\cX$, and containing $\cX$ itself since $id_{\cX} \in \RuHom^*_{\SeT}(\cX, \cX)$. Thus we have two different pictures, a local one given by $\cX = \dStk$, where individual stacks represent single phenomena, and a global one given by $\RuHom^*_{\SeT}(\cX,\cX)$, where all phenomena are considered along with their interactions, which can be regarded as a universal theory. This approach has the advantage of repackaging all of Physics in a purely Algebro-Geometric object such as a Segal Topos, in the spirit of Kontsevich's take on the Mirror Symmetry problem by introducing the Homological Mirror Symmetry formalism (\cite{KS}).\\

Our references for Segal categories are standard: \cite{TV1}, \cite{T}, \cite{T2}, \cite{P}, \cite{HS}, \cite{Si}. For derived stacks we use the foundational papers \cite{TV}, \cite{TV1}, \cite{TV4}, \cite{TV6}, \cite{T}, \cite{T2}.

\begin{acknowledgments}
The author would like to thank J. Bergner and M. Hoyois for useful exchanges, as well as the organizers of the conference ``Exchange of Mathematical Ideas - 2016" at Prescott where this work was completed. The author would like to thank J. Gemmer in particular for stimulating conversations.
\end{acknowledgments}

\newpage

\section{Grothendieck's take on Galois Theory}
We briefly remind the reader of the following fundamental result of Galois Theory: for $K$ a finite Galois extension of a field $k$, $G = \Gal(K/k)$, we have a one-to-one correspondence between subfield extensions $k \subset E \subset K$ and subgroups $H \subset G$, given by $E \mapsto \Gal(K/E)$ and $H \mapsto K^H$. It was Grothendieck's idea in SGA 1 to categorize this result, as clearly recounted in \cite{D}, by first using the well-known fact that there is a one-to-one correspondence between conjugacy classes of subgroups of $G$ and isomorphism classes of transitive $G$-sets, whose category we denote $\tSetsG$, and by regarding field extensions as a category $\cC^{\op}$ whose objects are fields, $k$ then becoming the terminal object of $\cC$. One then moves into the categorical realm. An object $A$ of $\cC$ being fixed, and under the assumption that for all objects $X$ of $\cC$, there is a morphism $A \rarr X$, which is further a strict epi, meaning the joint coequalizer of all the parallel pairs that it coequalizes, if we assume we have a notion of categorical quotient $A \rarr A/H$, preserved by $\Hom_{\cC}(A,-) = [A,-]$, and finally if we assume that $\text{End}(A) = \text{Aut}(A)$, then under those assumptions we have an adjoint equivalence:
\beq
[A,-]: \cC \rightleftarrows \tSetsG: A \times_G - \label{starSeT}
\eeq
thereby providing an abstraction of the classical Galois statement. Grothendieck's idea then was to observe that $[A,-]$ being a map from $\cC$ to $\Set$, one may as well start from a fiber functor $F: \cC \rarr \fSets$, left exact among other things, and under mild conditions on $\cC$ as well as on $F$ itself, one obtains a generalization of the above result (\cite{G}, \cite{D}). One would first show that $F$ is pro-corepresentable, $F = [P, -]$, from which one would make a transition from the result given in \eqref{starSeT} to the following adjoint equivalence:
\beq
F = \Hom_{\cC}(P,-) = [P,-]: \cC \rightleftarrows \fSets^{\pi}:P \times_{\pi} - \nonumber
\eeq
$\pi = \text{Aut}(P)^{\op}$ a profinite group. Grothendieck then observed that this depended of course on $F$ and that a way to make this independent of the choice of a fiber functor was to consider $\Gamma = \{F: \cC \rarr \fSets \}$, which he argued is a groupoid. Considering local systems on $\Gamma$, meaning introducing $i = \Hom_{\cC}(-,\fSets)$, he then proved in a few lines that $i\Gamma \simeq \cC$.

\section{Grothendieck's Galois Theory for Segal Topos}
We now promote $\cC$ and $\fSets$ to the status of Segal topoi as introduced in \cite{TV1}. Thus our base category $\cC$ becomes a fixed Segal topos $\cX$, and we consider functors valued in any Segal topos $T$. Fiber functors in the Segal topos setting would be represented by left exact left adjoints. This justifies the definition:
\beq
\RHomLgXT = \RHomgTX  \nonumber
\eeq
as given in \cite{TV1} where $\RHomg$ is denoted $\RHomgeom$ instead, and \newline $\RuHom^*_{\SeT}(\cX,T)$ refers to the sub-Segal category of $\RuHom(\cX,T)$ spanned by left exact left adjoints. Recall from the same paper that $\SePC$, the category of Segal pre-categories, is a symmetric monoidal model category (\cite{Ho}) with the direct product as monoidal product, hence $\Ho(\SePC)$ has an internal Hom object (\cite{Ho}) denoted $\RHom(A,B) \in \Ho(\SePC)$ for $A,B \in \Ho(\SePC)$, where $\RHom(A,B) \cong \uHom(A,RB)$, $\uHom$ the internal Hom object in $\SePC$. However, we also have an anti-equivalence $\RHomLgXT \simeq \RHomRgTX^{\op}$. That this is an anti-equivalence follows from the commutative diagram below; for $\cX$ and $\cY$ two Segal topos, $f,g: \cX \rarr \cY$ two geometric morphisms, $\alpha: f^* \Rarr g^*$ a morphism in $\RuHom^*_{\SeT}(\cY, \cX)$, $x \in \cX_0$ and $y \in \cY_0$, there corresponds $\beta: g_* \Rarr f_*$, a morphism in $\RuHom_{*, \SeT}(\cX, \cY)$ as in:
\beq
\begin{CD}
	\cX_{(g^* y, x)} @>\cong>> \cY_{(y, g_*x)} \\
@V\alpha^*_yVV @VV\beta_{x,*}V \\
	\cX_{(f^* y, x)} @>>\cong> \cY_{(y, f_*x)}
\end{CD} \nonumber
\eeq
Hence we can also define $\RHomgTX$ as:
\beq
\RHomgTX = \RHomRgTX^{\op} \nonumber
\eeq
\begin{PiGpd}
For $\cX$ and $T$ two Segal topos, $\RHomgTX$ is a Segal groupoid.
\end{PiGpd}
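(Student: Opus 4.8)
The plan is to verify the two defining conditions of a Segal groupoid for $A := \RHomgTX$, and to reduce the only substantive one to a rigidity statement about morphisms of fiber functors. First I would record that $A$ is a Segal category: it is the full sub-Segal-category of $\RHom(\cX,T) = \hatXop$ spanned by the fiber functors (left exact left adjoints), and $\RHom(\cX,T)$ is itself a Segal category since $\SePC$ is a closed symmetric monoidal model category, so that its internal hom lands in $\Ho(\SePC)$. Granting this, I would invoke the standard characterization (\cite{HS}, \cite{TV1}): since the Segal condition already holds, $A$ is a Segal groupoid precisely when every $1$-morphism is invertible up to homotopy, equivalently when $\Ho(A)$ is an ordinary groupoid, equivalently when the natural maps $A_2 \to A_1 \times_{A_0} A_1$ recording the two ways of reading a composable pair off a $2$-simplex are weak equivalences. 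This concentrates the whole content of the theorem into the invertibility of morphisms.

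Next I would unwind what a morphism of $A$ is. A point of $A_1$ lying over a pair of objects $f,g \colon T \to \cX$ is, under the identification $\RHomgTX = \RHomLgXT$, a natural transformation $\alpha \colon f^* \Rightarrow g^*$ of the associated fiber functors $f^*, g^* \colon \cX \to T$. By the mate correspondence displayed above, the commutative square turning $\alpha$ into $\beta \colon g_* \Rightarrow f_*$ and underlying the antiequivalence $\RHomLgXT \simeq \RHomRgTX^{\op}$, the transformation $\alpha$ is an equivalence if and only if $\beta$ is, so I am free to argue on whichever of the two adjoint sides is convenient. The theorem thus reduces to the rigidity statement that \emph{every} natural transformation between fiber functors of Segal topoi is an equivalence. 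This is precisely the Segal, $\infty$-categorical analogue of the classical fact recalled in Section~2, that natural transformations between fiber functors $\cC \to \fSets$ are isomorphisms, and which is exactly what makes Grothendieck's $\Gamma$ a groupoid.

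To prove rigidity I would follow Toen and Vezzosi's treatment of $\cX = \Loc(X)$ and try to isolate, then dispense with, the role of local constancy. There the mechanism is that $\Loc(X) \simeq \Top^{\Piinf(X)}$ presents $\RHomgeom(T, \Loc(X))$ as a mapping space of (pro-)shapes, $\RHomgeom(T,\Loc(X)) \simeq \Map(\text{shape}(T), \Piinf X)$, an object that is manifestly an $\infty$-groupoid, whence invertibility of every $\alpha$ is automatic. For general $\cX$ the plan is to mimic the pro-corepresentability step of Section~2: present each fiber functor $f^*$ in pro-corepresentable form, express $\alpha$ through the induced morphism of (pro-)objects, and use left exactness together with the generation and effective-epimorphism structure of the Segal topos $\cX$ to force $\alpha$ to be a pointwise equivalence.

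I expect this rigidity step to be the entire difficulty, and it is where the main obstacle sits. The argument for $\Loc(X)$ uses local constancy essentially, since that is what collapses the hom-object to a mapping space of shapes and hence to an $\infty$-groupoid, and it is not clear this survives for an arbitrary Segal topos $\cX$, where a priori one knows only that the $2$-cells between geometric morphisms assemble into a space, not that they are invertible. The real work is therefore to either identify an intrinsic feature of $\RHomg$ in the Segal setting (for instance a built-in groupoid localization, or the combined left-adjoint and left-exactness constraints) that forces every $\alpha$ to be an equivalence for general $\cX$, or to supply a replacement for the shape-mapping-space argument not relying on local constancy; and to carry this out at the level of the full mapping spaces $A_1(f,g)$ rather than merely on $\pi_0$, which is where the Segal rather than $1$-categorical setting will demand the most care.
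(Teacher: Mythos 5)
Your reduction of the theorem to a rigidity statement---that every natural transformation between left exact left adjoints $\cX \to T$ is an equivalence---is a correct reformulation, and your mate-correspondence remark matches the paper's antiequivalence $\RHomLgXT \simeq \RHomRgTX^{\op}$. But the proposal stops exactly where the theorem begins: you explicitly leave the rigidity step unproven for general $\cX$, flagging it yourself as the open obstacle, and the route you sketch for closing it (pro-corepresentability of each fiber functor plus generation/effective-epimorphism arguments) is not carried out and would need substantial new hypotheses. As it stands this is a plan with the whole content missing, not a proof.

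The missing idea, which the paper supplies, is that you never need rigidity for arbitrary $\cX$: the one case you already accept from Toen--Vezzosi, namely that $\RHomg(T,\Top)$ is a Segal groupoid (the $\Loc(*)$ instance of the shape-mapping-space argument you describe), suffices. By the very definition of a Segal topos there is a fully faithful $i: \cX \to \hat{B} = \RHom(B^{\op},\Top)$ with $B$ a Segal category, i.e.\ $\cX$ is an exact localization of a presheaf Segal topos. This induces a fully faithful map $\RHomRg(T,\cX) \to \RHomRg(T,\hat{B})$ (checking that the embedding preserves right adjoints whose left adjoints are left exact), and the internal-hom adjunction $\RHom(A \times^{\bL} B, C) \simeq \RHom(A,\RHom(B,C))$ of \cite{TV1} identifies $\RHomRg(T,\hat{B})$ with $\RHom(B^{\op}, \RHomg(T,\Top)^{\op})$. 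Since $\RHomg(T,\Top)$ is a Segal groupoid and a functor category into a Segal groupoid is again a Segal groupoid, $\RHomRg(T,\hat{B})$ is a Segal groupoid, and $\RHomRgTX$, being fully faithfully embedded in it, inherits invertibility of its $1$-morphisms---which is exactly your rigidity statement, obtained structurally rather than pointwise. So the feature of the Segal setting you were searching for is the presheaf-localization presentation of $\cX$ itself: it transports the problem along the adjunction to the universal case $\cX = \Top$, dissolving the difficulty you correctly identified but could not resolve.
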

\begin{proof}
	$\cX$ being a Segal topos, by definition there is a fully faithful map $i: \cX \rarr \hat{B} = \RHom(B^{\op}, \Top)$, $B$ a Segal Category, $\Top = \Loc(*) = L(\SetD)$. This map induces a fully faithful morphism of Segal categories $i_*: \RuHom(T, \cX) \rarr \RuHom(T, \hat{B})$. As a matter of fact, since right adjoints with a left exact left adjoint map to right adjoints with a left exact left adjoint, $i_*: \RuHom_{*, \SeT}(T, \cX) \rarr \RuHom_{*, \SeT}(T, \hat{B})$ is also fully faithful (this is the object of the lemmas that follow). Now using the adjunction formula $\RHom(A \times B, C) \simeq \RHom(A, \RHom(B,C))$ of \cite{TV1} limited to right adjoints with a left exact left adjoint:
\begin{align}
\RHomRg(T, \hat{B}) &=\RHomRg(T, \RHom(B^{\op}, \Top)) \nonumber \\
& \cong \RHomRg(T \times ^{\bL} B^{\op}, \Top) \nonumber \\
& = \RHom( B^{\op}, \RHomRg(T, \Top)) \nonumber \\
&= \RHom (B^{\op}, \RHomg(T, \Top)^{\op}) \nonumber
\end{align}
where on the second line we just have a left exact left adjoint between $T$ and $\Top$. We then use the fact that $\RHomg(T, \Top)$ is a Segal groupoid as proved in \cite{TV1}, so that $\RHom(B^{\op}, \RHomg(T, \Top)^{\op})$ itself is a Segal groupoid, hence so is $\RHomRg(T, \hat{B})$, and $\RHomRgTX$ faithfully maps into it, so is a Segal groupoid, or equivalently, $\RHomgTX$ is a Segal groupoid.
\end{proof}

In the proof we used the fact that for $A,B \in \Ho(\SeCat)$, $\RuHom(A,B) \in \Ho(\SeCat)$. From \cite{T}, $B \in \SeCat$ implies we have a fully faithful embedding $B \rarr LM$ for some model category $M$. It follows we have a fully faithful embedding $\RuHom(A,B) \rarr \RuHom(A,LM) \cong L(M^A)$, where we used the strictification theorem. Note this holds for $A \in \CatD$, the category of simplicial categories. However $\Ho(\SeCat) \simeq \Ho(\CatD)$, so we can regard $A$ as a simplicial category. Further, $L(M^A) \in \Ho(\CatD) \simeq \Ho(\SeCat)$, hence we regard $\RuHom(A,B)$ as a Segal category.\\

We can also mention in passing that for $A,B \in \Ho(\SeCat)$, a morphism $f: A \rarr B$ is regarded as an object of $\RuHom(A,B)$. Observe as in \cite{Si} that if $[0]$ is the category with one object 0, regarded as a Segal category, then by adjunction $\Hom_{\Ho(\SeCat)}(A \times [0] , B) \cong \Hom_{\Ho(\SeCat)}([0], \RuHom(A,B))$. An object in the left hand side Hom corresponds to a map of bisimplicial sets, while an object in the right hand Hom corresponds to picking an object of $\RuHom(A,B)$. In the same manner, if $[1]$ denotes the category with two objects 0 and 1 and a unique morphism $0 \rarr 1$, then for $f,g:A \rarr B$, a natural transformation $\alpha: f \Rarr g$ can be seen as an object of $\Hom_{\Ho(\SeCat)}(A \times [1], B)$, or by adjunction as an object of $\Hom_{\Ho(\SeCat)}([1], \RuHom(A,B))$, that is an object of $\RuHom(A,B)_{(f,g)}$.\\ 

We now go over this claim used in the proof of the above theorem that $\RuHom_{*,\SeT}(T,\cX) \rarr \RuHom_{*,\SeT}(T,\hat{B})$ is fully faithful. First, we have to understand what is a morphism of adjunctions. We follow \cite{McL}. Given two adjunctions $F: X \adj A: G$ and $F': X \adj A: G'$, $\sigma: F \Rarr F'$ and $\tau: G' \Rarr G$ are said to be conjugate when the following diagram commutes for all $x \in X$ and $a \in A$:
\beq
\xymatrix{
	A(F'x,a) \ar[d]_{(\sigma_x)^*} \ar[r]^{\cong} & X(x,G'a) \ar[d]^{(\tau_a)_*} \\
	A(Fx,a) \ar[r]_{\cong} &X(x,Ga)
}\nonumber
\eeq
We regard a conjugate pair $(\sigma,\tau)$ of natural transformations as a map of adjunctions from $(F,G)$ to $(F',G')$. Now recall that we want to show \newline $\RuHom_{*,\SeT}(T,\cX) \xrarr{i_*} \RuHom_{*,\SeT}(T,\hat{B})$ is fully faithful. We consider $F,G$ in $\RuHom_{*,\SeT}(T,\cX)$ and $\varphi: F \Rarr G$ in $\RuHom_{*,\SeT}(T,\cX)$. By the previous discussion, $\varphi$ should be part of a conjugate pair of natural transformations. To be precise, we consider a map of adjunctions $(G',G) \rarr (F',F)$, with $F': \cX \adj T:F$ and $G': \cX \adj T: G$, with $F'$ and $G'$ left exact, $\varphi': G' \Rarr F'$ and $\varphi: F \Rarr G$. Diagrammatically, this means the following diagram is commutative:
\beq
\xymatrix{
	T_{(F'x,t)} \ar[d]_{(\varphi'_x)^*} \ar[r]^{\cong} & \cX_{(x,Ft)} \ar[d]^{(\varphi_t)_*} \\
	T_{(G'x,t)} \ar[r]_{\cong} &\cX_{(x,Gt)}
}\nonumber
\eeq
which, using compositions in Segal categories should read like so:
\beq
\xymatrix{
	T_{(G'x,F'x)} \times T_{(F'x,t)} \ar[d] \ar[r]^{\cong} & \cX_{(x,Ft)} \times \cX_{(Ft,Gt)} \ar[d] \\
	T_{(G'x,t)} \ar[r]_{\cong} & \cX_{(x,Gt)}
}\nonumber
\eeq
Knowing that both $F'$ and $G'$ are left exact, and $\varphi': G' \Rarr F'$ (that is for all $x$, $\varphi'_x: G'x \rarr F'x$), for a diagram $x^{\alpha}$ in $\cX$ used to prove left exactness:
\begin{align}
	\varphi'_{\lim x^{\alpha}} & = G'(\lim x^{\alpha}) \rarr F'(\lim x^{\alpha}) \nonumber \\
	&= \lim G' x^{\alpha} \rarr \lim F' x^{\alpha} \nonumber \\
	& = \lim(G' x^{\alpha} \rarr F' x^{\alpha}) \nonumber \\
	&= \lim \varphi'_{x^{\alpha}} \nonumber
\end{align}
thus $\varphi'$ is left exact as well. To conclude, if $\varphi: F \Rarr G$ and $\varphi': G' \Rarr F'$ form a conjugate pair in $\RuHom(T,\cX)$, then $\varphi'$ is left exact.\\
\begin{istarwelldef}
	In the setting of the above theorem, $i_*: \RuHom_{*,\SeT}(T,\cX) \rarr \RuHom_{*,\SeT}(T,\hat{B})$ is well-defined.
\end{istarwelldef}
\begin{proof}
	Let $F \in \RuHom_{*,\SeT}(T,\cX)$. We first show $i_*F$ is a right adjoint with a left exact left adjoint. Since $\cX \in \SeT$, there is a fully faithful map $i: \cX \rarr \hat{B}$ as we saw in the proof of the above theorem, but we also have a left exact left adjoint $j: \hat{B} \rarr \cX$. $F$ itself has a left exact left adjoint that we denote by $F'$. We consider the following compositions:
	\beq
		T \adj^F_{F'} \cX \adj^i_j \hat{B} \nonumber
	\eeq
We argue $j^*F' \dashv i_*F$ in $\RuHom(T,\hat{B})$, and $j^*F'$ is left exact. We first show the adjointness. It suffices to write:
	\begin{align}
		T_{(F'\circ jb,t)} &\cong \cX_{(jb,Ft)} \nonumber \\ 
		&\cong \hat{B}_{(b,iFt)} \nonumber
	\end{align}
	We now show $j^*F'$ is left exact. Consider a diagram in $\hat{B}$. We have $j^*F'(\lim b^{\alpha}) = F'\circ j(\lim b^{\alpha}) = F'(\lim jb^{\alpha})$ since $j$ is left exact, and this is further equal to $\lim(F'\circ j b^{\alpha})$ since $F'$ is left exact as well. But this is $\lim j^*F'(b^{\alpha})$. Thus $i_*$ maps objects of $\RuHom_{*,\SeT}(T,\cX)$ to objects of $\RuHom_{*,\SeT}(T,\hat{B})$.\\

We now show $i_*$ maps morphisms in $\RuHom_{*,\SeT}(T,\cX)$ to morphisms of $\RuHom_{*,\SeT}(T,\hat{B})$. Let $F,G$ in $\RuHom_{*,\SeT}(T,\cX)$ with respective left exact left adjoints $F',G'$ in $\RuHom^*_{\SeT}(\cX,T)$, let $\varphi: F \Rarr G$ and $\varphi': G' \Rarr F'$ (left exact) be a conjugate pair for those adjoints. We need to show that $\phi = i_* \varphi: i_* F \Rarr i_*G$ has a left exact conjugate in $\RuHom^*_{\SeT}(\hat{B},T)$. We have the following compositions:
	\beq
	\xymatrix{
		T \ar@/^2pc/[rr]^{i \circ F} \ar@/^/[r]^F & \cX \ar@/^/[l]^{F'} \ar@/^/[r]^i & \hat{B} \ar@/^2pc/[ll]^{F'\circ j} \ar@/^/[l]^j
		} \nonumber
	\eeq
We have $i_* \varphi: i_*F \Rarr i_*G$ and $j^* \varphi': j^*G' \Rarr j^*F'$. We claim $(j^*\varphi', i_* \varphi)$ is a conjugate pair with $j^* \varphi'$ left exact. We want the following diagram to commute:
\beq
	\xymatrix{
		T_{(j^*F'b,t)} \ar[d]_{(j^*\varphi'_b)^*} \ar[r]^{\cong} & \hat{B}_{(b,i_*Ft)} \ar[d]^{(i_*\varphi_t)_*} \\
		T_{(j^*G'b,t)} \ar[r]_{\cong} & \hat{B}_{(b,i_*Gt)}
		}\nonumber
\eeq
We have:
\beq
	\xymatrix{
	T_{(j^*F'b,t)} \ar[d]_{(j^*\varphi'_b)^*} \ar@{=}[r] & T_{(F'jb,t)} \ar[d]_{(\varphi'_{jb})^*} \ar[r]^{\cong} &\cX_{(jb,Ft)} \ar[d]^{(\varphi_t)_*} \ar[r]^{\cong} &\hat{B}_{(b,iFt)} \ar[d]^{(i_* \varphi_t)_*} \\
	T_{(j^*G'b,t)} \ar@{=}[r] & T_{(G'jb,t)} \ar[r]_{\cong} &\cX_{(jb,Gt)} \ar[r]_{\cong} &\hat{B}_{(b,iGt)}
	}\nonumber
	\eeq
	The two squares on the right commute by adjunction, which gives us our desired commutativity, i.e. $j^* \varphi'$ and $i_* \varphi$ form a conjugate pair. We also have that $j^* \varphi'$ is left exact as a composite of left exact functors. To summarize, for $\varphi$ a morphism in $\RuHom_{*,\SeT}(T,\cX)$, part of a conjugate pair $(\varphi', \varphi)$ with $\varphi'$ left exact, $i_* \varphi$ is part of a conjugate pair $(j^*\varphi', i_* \varphi)$ in $\RuHom(T,\hat{B})$, that is morphisms in $\RuHom_{*,\SeT}(T,\cX)$ map to morphisms in $\RuHom_{*,\SeT}(T,\hat{B})$.
\end{proof}
\begin{istarff}
	In the setting of the above theorem, $i_*: \RuHom_{*,\SeT}(T,\cX) \rarr \RuHom_{*,\SeT}(T, \hat{B})$ is fully faithful.
\end{istarff}
\begin{proof}
	Let $F,G \in \RuHom_{*,\SeT}(T,\cX)$, $\phi: i_*F \rarr i_*G$ in $\RuHom_{*,\SeT}(T, \hat{B})$. We show we have a morphism $\varphi: F \Rarr G$ in $\RuHom_{*,\SeT}(T,\cX)$. Note that $\phi$ is a morphism in $\RuHom(T,\hat{B})$, so $i_*: \RuHom(T,\cX) \rarr \RuHom(T,\hat{B})$ being fully faithful, there is some $\varphi: F \Rarr G$, morphism in $\RuHom(T,\cX)$. We show it has a left exact conjugate $\varphi'$. We use the fact that $F$ and $G$ have left exact left adjoints that we denote by $F'$ and $G'$ respectively. Since $j \dashv i$, we have from earlier work that $j^*F' \dashv i_*F$ and $j^*G' \dashv i_*G$ in $\RuHom(T,\hat{B})$. Since $\phi$ is a morphism in $\RuHom_{*,\SeT}(T,\hat{B})$ from $i_*F$ to $i_*G$, it has a left exact conjugate $\phi': j^*G' \Rarr j^*F'$. At the level of adjoints, we have $T_{(j^*F'b,t)} \cong \hat{B}_{(b,i_*Ft)}$, which we can write as $T_{(F'jb,t)} \cong \hat{B}_{(b,iFt)}$. Letting $b = ix$, this reads:
	\beq
	T_{(F'jix,t)} \cong \hat{B}_{(ix,iFt)} \cong \cX_{(x,Ft)} \nonumber
	\eeq
	where we used the fact that $i$ is fully faithful. It follows $T_{(F'jix,t)} \cong \cX_{(x,Ft)}$, hence $F'ji \simeq F'$ up to equivalence. Since we work in $\Ho(\SeCat)$, we can therefore identify $F'ji$ with $F'$, and $G'ji$ with $G'$. We have the following commutative diagram:
	\beq
	\xymatrix{
		T_{(F'jix,t)} \ar[d]_{(\phi'_{ix})^*} \ar[r]^{\cong} & \cX_{(x,Ft)} \ar[d]^{(\varphi_t)_*} \\
		T_{(G'jix,t)} \ar[r]_{\cong} & \cX_{(x,Gt)}
		}\nonumber
	\eeq
that is $\phi'_{ix}: G'x \rarr F'x$, or $i_*\phi' = \varphi': G' \Rarr F'$ is conjugate to $\varphi$, and also left exact by composition, since $i$ right adjoint preserves limits, and $\phi'$ is left exact. Thus $i_*$ is full. For faithfulness, let $\varphi_1, \varphi_2: F \Rarr G$ in $\RuHom_{*,\SeT}(T, \cX)$. Suppose $i_* \varphi_1 = i_* \varphi_2 = \phi: i_*F \Rarr i_*G$. Pointwise this means $\phi_t: iFt \rarr iGt$, and $i$ being faithful, we have a unique map $\varphi_t: Ft \rarr Gt$, and this for any $t$, hence $\varphi_1 = \varphi_2$.  
\end{proof}

Regarding notations, since $\hatXop = \RHom(\cX, T)$ relative to $T$, and given that $\RHomLgXT$ is a full-sub Segal category thereof (\cite{TV1}), we will write $\PiTX$ for $\RuHom^*_{\SeT}(\cX, T) = \RHomgTX$, since we just proved that it is a Segal groupoid. This then defines a functor, for any Segal topos $T$:
\begin{align}
	\PiThat: \Ho(\SeT)^{\op} &\rarr \Ho(\SeGpd) \nonumber \\
\cX &\mapsto \PiTX \nonumber
\end{align}
where $\SeGpd$ denotes the category of Segal groupoids. We now define:
\begin{align}
	i_T: \Ho(\SeGpd)^{\op} &\rarr \Ho(\SeCat) \nonumber \\
A &\mapsto i_T(A) = \RHom(A,T) \nonumber
\end{align}
We claim $i_T$ is actually valued in $\Ho(\SeT)$ if $T$ is a Segal topos:
\begin{RuHomATSeT}
	If $T \in \SeT$, $A \in \SeCat$, then $i_T A = \RuHom(A,T) \in \Ho(\SeT)$.
\end{RuHomATSeT}
\begin{proof}
	Since $T \in \SeT$, we know there is a fully faithful embedding $i: T \rarr \hat{B} = \RuHom(B^{\op}, \Top)$, with a left exact left adjoint $j$. The map $i$ induces a fully faithful map:
	\begin{align}
		i_*: \RuHom(A,T) \rarr & \, \RuHom(A, \hat{B}) \nonumber \\
		&=\RuHom(A, \RuHom(B^{\op}, \Top)) \nonumber \\
		&\cong \RuHom(A \times^{\bL} B^{\op}, \Top) \nonumber 
	\end{align}

\newpage

We now determine whether $i_*$ has a left exact left adjoint. We show it is $j_*: i_{\hat{B}} A \rarr i_T A$. We want $\Hom_{i_T A}(j_*F, G) \cong \Hom_{i_{\hat{B}}A}(F, i_*G)$. Consider:
	\begin{align}
		\Hom_{i_T A}(j_*F, G) & = \Hom_{i_T A}(j \circ F, G) \nonumber \\
		&=\coprod_{a \in A} \Hom_T((j \circ F)(a), Ga) \nonumber \\
		&=\coprod_a \Hom_T(j(Fa), Ga) \nonumber \\
		&\cong \coprod_a \Hom_{\hat{B}}(Fa, i(Ga)) \nonumber \\
		&=\Hom_{i_{\hat{B}}A}(F,i_*G) \nonumber
	\end{align}
where we have used the fact that natural transformations between functors are defined pointwise. Thus $j_* \dashv i_*$. We now argue $j_*$ is left exact. A diagram of objects in $i_{\hat{B}} A$ is defined pointwise over $A$ hence:
	\begin{align}
		j_* \lim F^{\alpha} &=j \circ \lim F^{\alpha} \nonumber \\
		&= \coprod_{a \in A} j \circ \lim F^{\alpha}(a) \nonumber \\
		&= \coprod_a \lim j \circ F^{\alpha}(a) \nonumber \\
		&= \lim j_*F^{\alpha} \nonumber
	\end{align}
where we used the fact that $j$ is left exact. Thus $j_*$ is left exact, which completes the proof.
\end{proof}
\begin{iTLadj}
For $T \in \SeT$, we have an adjunction:
\beq
	i_T: \Ho(\SeGpd)^{\op} \rightleftarrows \Ho(\SeT): (\PiThat)^{\op} \nonumber
\eeq
\beq
	(\PiThat)^{\op} \dashv i_T \nonumber
\eeq
It follows that for all Segal topos $\cX$ we have a unique unit map up to homotopy:
\beq
	\cX \rarr i_T \Pi_{\infty,T}\widehat{\cX^{\op}}  \nonumber
\eeq
\end{iTLadj}
\begin{proof}
	It suffices to write, for $A,T  \in \Ho(\SeT)$, $B \in \Ho(\SeGpd)$:
\begin{align}
	\Hom^*_{\Ho(\SeT), lex}(A, i_T B) &= \Hom^*_{\Ho(\SeT), lex}(A, \RuHom(B,T)) \nonumber \\
	&\cong \Hom^*_{\Ho(\SeT), lex}(A \times^{\bL} B, T) \nonumber \\
	&\cong \Hom_{\Ho(\SeGpd)}(B, \RuHom^*_{\SeT}(A,T)) \nonumber \\
	&= \Hom_{\Ho(\SeGpd)}(B, \Pi_{\infty,T}\widehat{A^{\op}}) \nonumber \\
	&=\Hom_{\Ho(\SeGpd)^{\op}}(\Pi_{\infty,T}\widehat{A^{\op}},B) \nonumber
\end{align}
where on the second line the left exactness is between $A$ and $T$.
\end{proof}
It follows that for $T \in \SeT$, $\cX \rarr i_T$ is a universal arrow. In particular for $T = \cX$, we have a universal arrow from $\cX$ to its homotopical, internal perception $i_{\cX} = \RuHom(-,\cX)$. Recall that we regard a perception $\RuHom(-,X)$ as providing a representation of the object $\cX$. That the morphism $\cX \rarr \RuHom(-,\cX)$ is a universal arrow supports this concept. Now by universality, for $\cY \in \Ho(\SeGpd)$, we have a unique map $\Pi_{\infty,\cX} \widehat{\cX^{\op}} \rarr \cY$ in $\Ho(\SeGpd)^{\op}$ making the following diagram commutative:
\beq
\xymatrix{
	\cX \ar[r] \ar[dr] & \RuHom(\Pi_{\infty,\cX} \widehat{\cX^{\op}}, \cX) \ar@{.>}[d] \\
	& \RuHom(\cY, \cX)
} \nonumber
\eeq
In such a diagram, $\RuHom(\cY,\cX)$ is the internal perception of $\cX$ relative to $\cY \in \Ho(\SeGpd)$. The morphism $\cX \rarr \RuHom(\cY, \cX)$ is a transition morphism from $\cX$ to its perception relative to $\cY$. That the above diagram is commutative says that this part of $\RuHom(\cY,\cX)$ that originates from $\cX$ can be obtained from $i_{\cX}\Pi_{\infty,\cX}\widehat{\cX^{\op}}$.\\ 

\newpage

\section{Natural Phenomena}
We regard natural phenomena as following some basic rules, or fundamental laws, encoded in algebras. Coherent manifestations of such laws will be modeled by stacks valued in simplicial sets, and according to the philosophy of derived algebraic geometry, it is natural then to consider simplicial algebras. Hence we start with a commutative ring $k$, we let $\kMod$ be the category of $k$-modules, commutative monoids of which form $\skCAlg$ the category of simplicial $k$-algebras. Its opposite category $\dkAff = L(\skCAlg)^{\op}$ is referred to as the Segal category of derived affine stacks (\cite{TV2}, \cite{TV}, \cite{TV4}, \cite{T}). We put the ffqc topology on this Segal category. The category $\cX = \dStk = \dkAff^{\sim, ffqc}$ of affine stacks is a localization of the Segal category of pre-stacks $\widehat{\dkAff} = \RHom(\dkAff^{\op}, \Top)$. The former is a Segal topos (\cite{T}), hence all the above formalism applies to $\dStk$. In particular by universality of the unit map $\cX \rarr i_{\cX}\Pi_{\infty,\cX}\widehat{\cX^{\op}}$, we have a morphism $i_{\cX}\Pi_{\infty,\cX}\widehat{\cX^{\op}} \rarr \RuHom(\cY, \cX)$ in $\Ho(\SeT)$ to the homotopy internal perception of $\cX$ relative to $\cY$, for any $\cY \in \Ho(\SeGpd)$.\\

$\cX$ being a Segal category, we know there is a Segal category $\cU$ such that $\cU = \RuHom^*_{\SeT}(\cX,\cX) = \Pi_{\infty,\cX}\widehat{\cX^{op}}$. In contrast to just having the Segal category $\cX$, with its objects and morphisms, $\cU$ provides a functorial presentation of $\cX$ in the following sense. Consider $\psi: \cX \rarr \cX$ a geometric morphism in $\cU$, let $F,G \in \cX$. Then we have an induced morphism of simplicial sets $\psi: \cX_{(F,G)} \rarr \cX_{(\psi F, \psi G)}$. Thus $\cU$ presents $\cX$ as a cohesive whole, whereby all morphisms, that is all dynamics within $\cX$, are related whenever a connection exists. Further, $\psi = id_{\cX}$ is clearly an element of $\cU$. However $id_{\cX}$ just reproduces $\cX$. Thus we can morally see $\cX$ as an object of $\cU$, or $\cU$ as an enlargement of $\cX$. It follows that a comprehensive view of $\cX$ occurs in a higher Segal category $\cU$. Observe that this can be repeated: $\RuHom^*_{\SeT}(\cU, \cU)$ is yet another Segal category, in which $\cU$ would take its full meaning.\\ 

\newpage

Recall that one representation of $\cX$ is provided by its homotopical internal perception $\RuHom(-,\cX)$. By the universality of $\cX \rarr i_{\cX} \Pi_{\infty,\cX} \widehat{\cX^{op}}$, for any morphism $f: \cX \rarr \RuHom(\cY, \cX)$ for $\cY \in \Ho(\SeGpd)$, which corresponds to focusing on one facet of $\cX$ relative to $\cY$, we have a unique morphism $ \phi: \cY \rarr \Pi_{\infty,\cX}\widehat{\cX^{\op}}=\cU$ such that the following diagram is commutative:
\beq
\xymatrix{
	\cX \ar[r] \ar[dr] & \RuHom(\cU,\cX) \ar@{.>}[d] \\
	& \RuHom(\cY,\cX)
} \nonumber
\eeq
$\phi$ provides us with that information of $\cU$ which is originating from $\cY$. The commutative diagram tells us the perception of $\cX$ relative to $\cY$ factors through the perception of $\cX$ relative to $\cU$. In other terms, any perception of $\cX$ by Segal groupoids factors through $i_{\cX}\Pi_{\infty, \cX}\widehat{\cX^{\op}}$.\\

One can contrast this with the concept of having a field in Physics being defined on a target space, itself resulting from a compactification. Here the field is replaced by a functor on simplicial algebras and with target space $\SetD$. Those functors provide a manifestation of natural laws. The collection of all such functors is the Segal Topos $\dStk$, which along with $\RuHom^*_{\SeT}(\dStk, \dStk)$ would consequently provide a universal theory of physical phenomena.

\newpage


\begin{thebibliography}{50}
\bibitem[D]{D} E.J. Dubuc, C.S. de la Vega, \textit{On the Galois Theory of Grothendieck}, arxiv:math/0009145v1 [math.CT].
\bibitem[RG3]{RG3} R. Gauthier, \textit{A Dual Representation in Spectral Algebraic Geometry}, arXiv:2006.15687[math.AG].
\bibitem[GoJa]{GoJa} P.G. Goerss, J.F. Jardine, \textit{Simplicial Homotopy Theory}, Modern Birkhauser Classics, Birkhauser Verlag, Boston, 2009.
\bibitem[G]{G} A. Grothendieck, SGA 1 (1960-61), Springer Lecture notes \textbf{224}, 1971.
\bibitem[Ho]{Ho} M. Hovey, \textit{Model Categories}, Mathematical Surveys and monographs, Vol. \textbf{63}, Amer. Math. Soc., Providence, 1998.
\bibitem[Hoy]{Hoy} M. Hoyois, \textit{Higher Galois Theory}, arxiv:1506.07155v3 [math.CT]
\bibitem[HS]{HS} A. Hirschowitz, C. Simpson, \textit{Descente pour les n-champs}, arxiv:math/9807049 [math.AG].
\bibitem[KS]{KS} M. Kontsevich, Y. Soibelman, \textit{Homological Mirror Symmetry and Torus Fibrations}, arxiv:math/0011041v2 [math.SG].
\bibitem[MLM]{MLM} S. Mac Lane, I. Moerdijk, \textit{Sheaves in Geometry and Logic}, Springer-Verlag, New York, 1992.
\bibitem[McL]{McL} S. Mac Lane, \textit{Categories for the Working Mathematician}, 2nd Ed., Springer Verlag, New York, 2010.
\bibitem[P]{P} R. Pellissier, \textit{Categories Enrichies Faibles}, arXiv:math/0308246v1 [math.AT].
\bibitem[Si]{Si} C. Simpson, \textit{A Giraud-type Characterization of the Simplicial Categories Associated to Closed Model Categories as $\infty$-pretopoi}, arXiv: math.AT/9903167.
\bibitem[T2]{T2} B.Toen, \textit{Homotopical and Higher Categorical Structures in Algebraic Geometry}, arXiv:math/03122262[math.AG].
\bibitem[T]{T} B.Toen, \textit{Higher and Derived Stacks: a global overview}, arXiv:math.AG/0604504.
\bibitem[TV1]{TV1} B.Toen, G.Vezzosi, \textit{Segal Topoi and Stacks over Segal Categories}, arxiv:math/0212330v2 [math.AG].
\bibitem[TV2]{TV2} B.Toen, G. Vezzosi, \textit{Algebraic Geometry over Model Categories: A general approach to derived algebraic geometry}, arXiv:math.AG/0110109.
\bibitem[TV]{TV} B.Toen, G.Vezzosi, \textit{Homotopical Algebraic Geometry I: Topos Theory}, arXiv:math.AG/0207028.
\bibitem[TV4]{TV4} B.Toen, G.Vezzosi, \textit{Homotopical Algebraic Geometry II: Geometric Stacks and Applications}, arXiv:math.AG/0404373.
\bibitem[TV6]{TV6} B.Toen, G.Vezzosi, \textit{From HAG to DAG: Derived Moduli Stacks}, arXiv:math.AG/0210407.
\end{thebibliography}
\end{document}